\numberwithin{equation}{section}\usepackage{dsfont}
\begin{document}

\title[Stability of  set-valued  generalized additive
 functional equation]
{Stability of  set-valued  generalized additive Cauchy functional equation}

\author[G. Lu]{Gang Lu$^*$}
\address{Gang Lyu \newline \indent Division of Foundational Teaching, Guangzhou College of Technology and Business, Guangzhou 510850, P.R. China}
\email{lvgang@ybu.edu.cn}\vskip 2mm
\author[M.Mu]{Mingzhu Mu}
\address{Mingzhu Mu\newline \indent Department of Mathematics, School of Science, ShenYang University of Technology, Shenyang 110870, P.R. China}
\email{28138389@qq.com}\vskip 2mm
\author[Y. Jin]{Yuanfeng Jin$^*$}
\address{Yuanfeng Jin \newline \indent Department of Mathematics,  Yanbian University, Yanji 133001, P.R. China}
\email{yfkim@ybu.edu.cn}\vskip 2mm

\author[C. Park]{Choonkil Park}
\address{Choonkil Park\newline \indent  Research Institute for Natural Sciences,
Hanyang University, Seoul 04763,   Korea}
\email{baak@hanyang.ac.kr}\vskip 2mm

\begin{abstract}

The main purpose of this paper is to determine the solution of generalized
convex set-valued mappings satisfying certain functional equation. Some conclusions of stability of set-valued functional equations are obtained.
\end{abstract}

\subjclass[2010]{Primary 54C60, 39B52, 47H04, 49J54.}

\keywords{Hyers-Ulam stability;  additive set-valued
 functional equation; closed and convex subset. \\$^*$Corresponding authors.}

\theoremstyle{definition}
  \newtheorem{df}{Definition}[section]
    \newtheorem{rk}[df]{Remark}
\theoremstyle{plain}
  \newtheorem{lem}[df]{Lemma}
  \newtheorem{thm}[df]{Theorem}
  \newtheorem{cor}[df]{Corollary}
    \newtheorem{prop}[df]{Proposition}
    \newtheorem{example}[df]{Example}
\setcounter{section}{0}

\maketitle

\baselineskip=14pt

\numberwithin{equation}{section}

\vskip .2in

\section{Introduction and preliminaries}

The goal of this paper is to characterize set-valued solutions of a generalized additive
Cauchy functional equation.  In 1992,   Rassias and Tabor \cite{RT} asked whether the functional
\begin{eqnarray*}
f(ax+by+c)=Af(x)+Bf(y)+C
\end{eqnarray*}
with $abAB\neq 0$ is stable in the sense of Hyers, Ulam, and Rassias. And the functional equation is called {\it  generalized additive
Cauchy functional equation}.

 Badea \cite{Ba} answered this question of Rassias and Tabor for the case when $c=C=0,a=A$ and $b=B$. As Gajda \cite{Ga} extended the result of  Badea  \cite{Ba} also modified the
result of the previous theorem by using a similar method.
  Jung \cite{JU} investigated the stability problem in complex Banach space for a generalized additive
Cauchy functional equation
\begin{eqnarray*}
f\left(x_0+\sum_{j=1}^{m}a_jx_j\right)=\sum_{i=1}^m b_i f\left(\sum_{j=1}^m a_{ij}x_j\right).
\end{eqnarray*}

An interesting question concerning set-valued mappings that satisfy some functional inclusions is the existence of solutions verifying certain properties. In the stability theory of functional equations, it is important to find solutions of set-valued mappings that are also solutions of certain functional equations.
 And more results were proved for set-valued mappings satisfying general additive Cauchy of the type
\begin{eqnarray}\label{eqn11}
f(ax+by+c)\subseteq Af(x)+Bf(y)+C
\end{eqnarray}
or
\begin{eqnarray}\label{eqn12}
Af(x)+Bf(y)\subseteq f(ax+by+c)+C
\end{eqnarray}
 where $f:X\rightarrow 2^Y, X$ and $Y$ are real vector spaces,
 $a,b,A,B\in \mathds{R}, c\in X$ and $C\in 2^Y$. By $2^Y$ we denote
 the collection of nonempty subsets of $Y$.
Nikodam and Popa \cite{NP1} considered the general solution of set-valued mappings satisfying linear  relation, which can be regarded as a generalization of the additive single-valued functional equation with $a=A,b=B$ and $C=c=0$. Lu {\it et al.} \cite{LP, POS} investigated the approximation of some set-valued functional
equations with $C=c=0$.
   Popa \cite{Po1} determined conditions for which a set-valued map that
verifies a relation of the form (\ref{eqn11}) admits a unique solution
$f:X\rightarrow Y$ that satisfies
\begin{eqnarray*}
f(ax+by+c)\subseteq Af(x)+Bf(y),
\end{eqnarray*}
for all $x,y \in X$. The case of equality of form (\ref{eqn11}) was also studied in  \cite{PV}. The similar results \cite{IP, NP2} were obtained in the case of (\ref{eqn12}).

Assume that $Y$ is a topological vector space satisfying the $T_0$-separation axiom. For real numbers $s,t$ and sets $A,B\subset Y$ we put $sA+tB:=\{y\in Y; y=sa+tb,a\in A,b\in B\}$. Suppose that the space $2^Y$ of all subsets of $Y$ is endowed with the Hausdorff topology (see  \cite{RH}). A set-valued function $F:X\rightarrow 2^Y$ is said to be additive if it satisfies the Cauchy functional equation $F(x_1+x_2)=F(x_1)+F(x_2), x_1,x_2\in X$,  where $X$ is a semigroup.

In this paper, we characterize set-valued solutions of the following functional equation
\begin{eqnarray}
f(ax+by+c)= AG(x)+BH(y)+C
\end{eqnarray} for all $x,y \in X$, where $a, b,A$ and $B$ are positive real numbers, $c$ and $C$ are  fixed bounded convex sets.

The family of all
closed and convex subsets of $Y$ will be denoted by $CC(Y)$, and the sets
of all real numbers, rational numbers and positive integers are denoted by $\mathbb{R}, \mathbb{Q}, \mathbb{N}$, respectively.

\begin{lem} {\rm \cite{NI}} \label{lem1.1}  Let $\lambda$ and $\mu$ be  real numbers.  If $A$ and $B$ are nonempty subsets of a real vector space
$X$,  then
\begin{eqnarray*}
\lambda(A+B)=\lambda A+\lambda B,  \\ (\lambda+\mu)A\subseteq
\lambda A+\mu B.\nonumber
\end{eqnarray*}
Moreover, if $A$ is  a convex set and $\lambda,\mu
\geq 0$, then we have
\begin{eqnarray*}(\lambda+\mu)A=\lambda A+\mu A.
\end{eqnarray*}
\end{lem}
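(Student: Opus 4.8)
The plan is to verify each of the three assertions by pushing everything down to the level of individual elements of $X$, so that the set identities follow from ordinary distributivity of scalar multiplication over vector addition. For the first identity, I would take a typical element of $\lambda(A+B)$, which by definition of the Minkowski-type operation has the form $\lambda(a+b)$ with $a\in A$ and $b\in B$, and rewrite it as $\lambda a+\lambda b\in\lambda A+\lambda B$; reading the same computation backwards yields the reverse inclusion, so the two sets coincide. No hypothesis on $A$, $B$, or $\lambda$ is needed at this stage.

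For the inclusion $(\lambda+\mu)A\subseteq\lambda A+\mu A$ (the second displayed line, where the intended right-hand side is $\lambda A+\mu A$), I would again argue elementwise: an arbitrary point of the left-hand side is $(\lambda+\mu)a$ for some single $a\in A$, and $(\lambda+\mu)a=\lambda a+\mu a$, which lies in $\lambda A+\mu A$ because the \emph{same} representative $a$ may be used in both summands. I would note in passing that this inclusion is generally strict when $A$ is not a singleton and the scalars have opposite signs, which is precisely what motivates the convexity hypothesis in the last part.

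For the final equality, assume $A$ convex and $\lambda,\mu\ge 0$. The inclusion $\subseteq$ is exactly the previous step, so only $\supseteq$ must be shown. Take $x=\lambda a_1+\mu a_2$ with $a_1,a_2\in A$. If $\lambda+\mu=0$ then $\lambda=\mu=0$, and since $A\neq\emptyset$ both $(\lambda+\mu)A$ and $\lambda A+\mu A$ equal $\{0\}$, so this case is immediate. If $\lambda+\mu>0$, I would factor
\begin{equation*}
x=(\lambda+\mu)\left(\frac{\lambda}{\lambda+\mu}\,a_1+\frac{\mu}{\lambda+\mu}\,a_2\right),
\end{equation*}
observe that the two coefficients are nonnegative and sum to $1$, so the bracketed point is a convex combination of $a_1$ and $a_2$ and hence lies in $A$; therefore $x\in(\lambda+\mu)A$, completing the reverse inclusion.

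There is no substantial obstacle here: the argument is essentially a careful unwinding of the definitions of $sA+tB$ together with convexity. The only places demanding any attention are the degenerate scalar case $\lambda+\mu=0$ (where one must invoke $A\neq\emptyset$ to pin down the common value $\{0\}$) and the step where the normalized weights $\lambda/(\lambda+\mu)$ and $\mu/(\lambda+\mu)$ are recognized as coefficients of a genuine convex combination; everything else is routine bookkeeping.
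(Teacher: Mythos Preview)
Your argument is correct and is exactly the standard elementwise verification one would expect; you also correctly spotted that the ``$\mu B$'' on the right of the second displayed inclusion is a typographical slip for ``$\mu A$'', since otherwise the statement is false in general. There is nothing to compare against, however: the paper does not supply a proof of this lemma but simply quotes it from Nikodem~\cite{NI}, so your write-up stands on its own.
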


\begin{lem} {\rm \cite{RA}} \label{lem1.2}
Let $A,B$ be subsets of $Y$ and assume that $B$ is closed and convex. If there exists a bounded and nonempty set $C\subset Y$ such that $A+C\subset B+C$, then $A\subset B$.
\end{lem}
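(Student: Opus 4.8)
This is a Rådström-type cancellation statement, and the natural route is to first \emph{amplify} the inclusion $A+C\subseteq B+C$ by iterating it, and then to \emph{shrink} the error set $C$ by dividing by large integers, exploiting that $C$ is bounded and that $B$ is closed. It is worth noting in advance that the argument will use only the convexity of $B$ (in the amplification step) and the closedness of $B$ (in the limiting step); no regularity of $A$ is needed.

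First I would prove, by induction on $n\in\mathbb{N}$, that the $n$-fold Minkowski sum satisfies
\begin{eqnarray*}
\underbrace{A+\cdots+A}_{n}+C\ \subseteq\ nB+C .
\end{eqnarray*}
For $n=1$ this is the hypothesis. Assuming it for $n$, associativity and monotonicity of the Minkowski sum give
\begin{eqnarray*}
\underbrace{A+\cdots+A}_{n+1}+C&=&A+\Bigl(\underbrace{A+\cdots+A}_{n}+C\Bigr)\ \subseteq\ A+nB+C\\
&=&nB+(A+C)\ \subseteq\ nB+B+C\ =\ (n+1)B+C ,
\end{eqnarray*}
where the final equality uses that $B$ is convex, so that $nB+B=(n+1)B$ by Lemma~\ref{lem1.1}.

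Next I would fix an arbitrary $a\in A$ and, using that $C\neq\emptyset$, a point $c_0\in C$. Since $na$ is the sum of $n$ copies of $a\in A$, it belongs to the $n$-fold sum $A+\cdots+A$, hence $na+c_0\in nB+C$; thus there are $b_n\in B$ and $c_n\in C$ with $na+c_0=nb_n+c_n$, that is,
\begin{eqnarray*}
b_n=a+\tfrac1n\,(c_0-c_n).
\end{eqnarray*}
Because $C$ is bounded, so is $c_0-C$, and therefore for every balanced neighbourhood $U$ of $0$ in $Y$ there is $N$ with $\tfrac1n(c_0-C)\subseteq U$ for all $n\ge N$; consequently $\tfrac1n(c_0-c_n)\to 0$ and hence $b_n\to a$. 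Since every $b_n$ lies in the closed set $B$, each neighbourhood of $a$ meets $B$, so $a\in\overline B=B$. As $a\in A$ was arbitrary, $A\subseteq B$.

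The only genuinely delicate point is this last limiting step: one must invoke the defining property of a bounded subset of a topological vector space — that it is absorbed by every neighbourhood of the origin — in order to conclude that $\tfrac1n(c_0-c_n)$ tends to $0$ \emph{uniformly} over the admissible choices of $c_n\in C$, and hence that $b_n\to a$. Everything else is bookkeeping with Minkowski sums together with one application of Lemma~\ref{lem1.1}.
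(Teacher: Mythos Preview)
The paper does not supply its own proof of Lemma~\ref{lem1.2}; the lemma is simply quoted from R\aa dstr\"om~\cite{RA} and used as a black box in the proof of the main theorem. There is therefore no in-paper argument to compare against.

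That said, your proof is correct and is essentially the classical R\aa dstr\"om cancellation argument: amplify the hypothesis to $\underbrace{A+\cdots+A}_{n}+C\subseteq nB+C$ using the convexity of $B$ (via $nB+B=(n+1)B$, which is exactly Lemma~\ref{lem1.1}), then for a fixed $a\in A$ write $b_n=a+\tfrac{1}{n}(c_0-c_n)\in B$ and use boundedness of $C$ to force $b_n\to a$, whence $a\in\overline{B}=B$. The care you take in the last paragraph---reducing to balanced neighbourhoods so that $\tfrac{1}{n}(c_0-C)\subseteq U$ eventually, uniformly in the choice of $c_n$---is exactly the point where the TVS notion of boundedness enters, and you handle it correctly. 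No regularity of $A$ is needed, as you note.
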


The following lemmas  are rather known and can be easily verified. The proofs of them can be found in \cite{NI, KN}.

\begin{lem}\label{lem1.3}
If $\{A_n\}_{n\in \mathbf{N}}$ and $(B_n)_{n\in \mathbf{N}}$ are decreasing sequences of compact subsets of $Y$, then $\bigcap_{n\in \mathbb{N}}(A_n+B_n)=\bigcap_{n\in \mathbb{N}}A_n+\bigcap_{n\in \mathbb{N}}B_n.$
\end{lem}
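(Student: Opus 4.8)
The plan is to establish the two inclusions separately; the reverse one is where the work lies. Write $A := \bigcap_{n\in\mathbb{N}} A_n$ and $B := \bigcap_{n\in\mathbb{N}} B_n$. The inclusion $A + B \subseteq \bigcap_{n\in\mathbb{N}}(A_n + B_n)$ is immediate: if $a \in A$ and $b \in B$, then $a \in A_n$ and $b \in B_n$ for every $n$, whence $a + b \in A_n + B_n$ for every $n$. This step uses neither compactness nor monotonicity of the sequences.

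For the converse, fix $y \in \bigcap_{n\in\mathbb{N}}(A_n + B_n)$. The idea is to produce a single point $a$ lying in every $A_n$ and such that $y - a$ lies in every $B_n$. To this end put
\[
K_n := A_n \cap (y - B_n), \qquad n \in \mathbb{N}.
\]
Since $y \in A_n + B_n$, there are $a_n \in A_n$ and $b_n \in B_n$ with $y = a_n + b_n$, so $a_n \in K_n$ and each $K_n$ is nonempty. Because $\{A_n\}$ and $\{B_n\}$ are decreasing, $A_{n+1} \subseteq A_n$ and $y - B_{n+1} \subseteq y - B_n$, hence $\{K_n\}$ is decreasing too. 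Moreover, in a topological vector space satisfying the $T_0$-axiom (which is then Hausdorff) the map $z \mapsto y - z$ is a homeomorphism, so $y - B_n$ is compact; therefore $K_n$, being the intersection of the two compact sets $A_n$ and $y - B_n$ in a Hausdorff space, is compact, and in particular closed.

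Thus $\{K_n\}$ is a decreasing sequence of nonempty compact subsets of the compact set $A_1$, and by the finite intersection property $\bigcap_{n\in\mathbb{N}} K_n \neq \emptyset$. Choosing $a \in \bigcap_{n\in\mathbb{N}} K_n$, we get $a \in A_n$ for all $n$, so $a \in A$, and $y - a \in B_n$ for all $n$, so $y - a \in B$; hence $y = a + (y - a) \in A + B$. This yields $\bigcap_{n\in\mathbb{N}}(A_n + B_n) \subseteq A + B$ and finishes the argument. The one genuinely nontrivial point, and the main obstacle, is the nonemptiness of $\bigcap_n K_n$: compactness is indispensable here, as the example $A_n = [n,\infty)$, $B_n = [-n,\infty)$ in $\mathbb{R}$ (where the left-hand side equals $[0,\infty)$ while $A = B = \emptyset$) already shows.
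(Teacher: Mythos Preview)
Your argument is correct and is the standard compactness proof of this fact. The paper itself does not supply a proof of this lemma at all; it merely states that the result is ``rather known'' and refers the reader to Nikodem's papers \cite{NI, KN}. So there is nothing to compare against, but your approach via the nested compact sets $K_n = A_n \cap (y - B_n)$ and the finite intersection property is exactly the expected one.

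One small slip, though it does not affect the proof: in your closing illustrative example you take $B_n = [-n,\infty)$, but this sequence is \emph{increasing}, not decreasing, so it does not meet the hypotheses of the lemma except for compactness. A correct variant is $A_n = [n,\infty)$ and $B_n = (-\infty,-n]$; both sequences are decreasing, $A_n + B_n = \mathbb{R}$ for every $n$, yet $\bigcap_n A_n = \bigcap_n B_n = \emptyset$. This makes the point you intended.
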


\begin{lem}\label{lem1.4}
If $\{A_n\}_{n\in \mathbb{N}}$ is a decreasing sequence of compact subsets of  $Y$, then $A_n\rightarrow \bigcap_{n\in \mathbb{N}}A_n.$
\end{lem}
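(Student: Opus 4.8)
The plan is to put $A:=\bigcap_{n\in\mathbb N}A_n$ and to show that $A_n\to A$ with respect to the Hausdorff topology on $2^Y$ fixed in the Introduction (equivalently, when $Y$ is metrisable, that the Hausdorff distance $h(A_n,A)\to 0$). First I would record the elementary facts that make the notion meaningful: since a $T_0$ topological vector space is automatically Hausdorff, each $A_n$ is closed and limits of sequences are unique; and $A$ is a nonempty compact set, being the intersection of a decreasing family of nonempty compact sets (nonempty by the finite intersection property, and closed, hence compact as a closed subset of $A_1$).

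The easy half is the inclusion $A\subseteq A_n$ for every $n$, immediate from the definition of intersection; this already shows that the one-sided excess $e(A,A_n):=\sup_{y\in A}d(y,A_n)$ vanishes, so the whole point is to control $e(A_n,A)=\sup_{y\in A_n}d(y,A)$. Concretely, I would fix an arbitrary neighbourhood $U$ of $0$ in $Y$ and aim to produce an index $N$ with $A_n\subseteq A+U$ for all $n\ge N$; together with $A\subseteq A_n\subseteq A_n+U$ this is exactly the assertion that $A_n\to A$.

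The heart of the argument is a compactness-and-contradiction step. Suppose no such $N$ exists for some neighbourhood $U$ of $0$; then there are a strictly increasing sequence $(n_k)$ and points $x_{n_k}\in A_{n_k}\setminus(A+U)$. Because $\{A_n\}$ is decreasing, all the $x_{n_k}$ lie in the compact set $A_{n_1}$, so after passing to a subsequence we may assume $x_{n_k}\to x\in A_{n_1}$. For each fixed $m$ we have $x_{n_k}\in A_m$ for all large $k$ (since $n_k\ge m$ eventually and the $A_n$ decrease) and $A_m$ is closed, so $x\in A_m$; as $m$ is arbitrary, $x\in\bigcap_m A_m=A$. Choosing now a balanced neighbourhood $V$ of $0$ with $V+V\subseteq U$, we have $x_{n_k}\in x+V\subseteq A+V\subseteq A+U$ for large $k$, contradicting $x_{n_k}\notin A+U$. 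Hence $A_n\to A$.

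I expect the only real obstacle to be the topological bookkeeping in that last step — extracting a single convergent subsequence trapped away from $A$ — which is handled precisely by using that the $A_n$ decrease inside the \emph{fixed} compact set $A_1$, so a convergent subsequence is available, together with the closedness of each $A_m$ to pin the limit inside $A$. The metric version is literally the same argument with $A+U$ replaced by the open $\varepsilon$-neighbourhood of $A$, and the combination of $A\subseteq A_n$ with $A_n\subseteq A^\varepsilon$ for large $n$ gives $h(A_n,A)\le\varepsilon$.
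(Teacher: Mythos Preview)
The paper does not actually prove this lemma; it merely records it as well known and sends the reader to \cite{NI, KN}. So there is no in-paper argument to compare against, and your proposal has to stand on its own.

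Your outline is the standard one and is essentially right: set $A=\bigcap_n A_n$, note that $A$ is nonempty compact and that $A\subseteq A_n$ is immediate, and then show that for every neighbourhood $U$ of $0$ one eventually has $A_n\subseteq A+U$. There is, however, one genuine technical slip in the contradiction step. You extract a convergent subsequence of $(x_{n_k})$ by appealing to compactness of $A_{n_1}$, but in a general $T_0$ topological vector space compact sets need \emph{not} be sequentially compact (e.g.\ $Y=\mathbb{R}^I$ with $I$ uncountable and the compact set $[0,1]^I$). The paper's standing hypothesis on $Y$ is only $T_0$, not metrisability or first countability, so this passage does not go through as written; your closing remark about ``the metric version'' is fine, but the general case needs a different device.

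The repair is painless and in fact cleaner than the sequential argument. With $U$ fixed, put $K_n:=A_n\setminus(A+U)$; each $K_n$ is closed in the compact set $A_n$, hence compact, and the family $(K_n)$ is decreasing. If every $K_n$ were nonempty, the finite intersection property would give $\bigcap_n K_n\neq\emptyset$; but
\[
\bigcap_{n} K_n \;\subseteq\; \Bigl(\bigcap_{n} A_n\Bigr)\setminus(A+U)\;=\;A\setminus(A+U)\;=\;\emptyset.
\]
Hence some $K_N=\emptyset$, i.e.\ $A_N\subseteq A+U$, and monotonicity gives $A_n\subseteq A+U$ for all $n\ge N$. This avoids sequences entirely and matches the level of generality assumed in the paper.
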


\begin{lem}\label{lem1.5}
If $A$ is a bounded subset of $Y$ and $(s_n)_{n\in \mathbb{N}}$ is a real sequence converging to an $s\in \mathds{R}$, then $s_nA\rightarrow sA.$
\end{lem}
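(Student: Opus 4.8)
The plan is to reduce this assertion, which concerns convergence in the Hausdorff topology on $2^Y$, to the defining property of a bounded set in a topological vector space. Recall that $s_nA\to sA$ means that for every neighborhood $W$ of the origin in $Y$ there is an index $N$ with $s_nA\subseteq sA+W$ and $sA\subseteq s_nA+W$ for all $n\ge N$. So I would fix such a $W$ and, since balanced neighborhoods of $0$ form a base of neighborhoods of the origin in a topological vector space, assume at the outset that $W$ is balanced.

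First I would use the boundedness of $A$: there exists $\delta>0$ such that $rA\subseteq W$ whenever $|r|\le\delta$. (When $Y$ is normed and $CC(Y)$ carries the Hausdorff metric, this is merely the remark that $(s_n-s)A$ lies in the ball of radius $|s_n-s|\,\sup_{a\in A}\|a\|$, which is essentially the whole content of the lemma.) Since $s_n\to s$ in $\mathbb{R}$, I then pick $N$ so that $|s_n-s|\le\delta$ for all $n\ge N$.

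The key step is the pointwise identity $s_na=sa+(s_n-s)a$, valid for every $a\in A$, which together with Lemma~\ref{lem1.1} yields $s_nA\subseteq sA+(s_n-s)A$ and, symmetrically, $sA\subseteq s_nA+(s-s_n)A$. For $n\ge N$ both $(s_n-s)A$ and $(s-s_n)A$ are contained in $W$, because $|\pm(s_n-s)|\le\delta$ and $W$ is balanced; hence $s_nA\subseteq sA+W$ and $sA\subseteq s_nA+W$ for all $n\ge N$, which is exactly the required convergence.

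I do not expect a genuine obstacle here. The only points demanding care are to phrase the boundedness of $A$ correctly for a possibly non-metrizable topological vector space---through balanced neighborhoods of zero rather than a norm bound---and to keep track of both inclusions simultaneously, which is the reason for passing to a balanced $W$ at the start.
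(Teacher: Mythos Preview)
Your argument is correct: reducing to a balanced neighborhood $W$, invoking boundedness to absorb $(s_n-s)A$ into $W$, and using the elementary inclusion $(s_n)A\subseteq sA+(s_n-s)A$ (and its symmetric counterpart) is exactly the standard verification of this convergence in the Hausdorff topology. One minor remark: you do not really need to cite Lemma~\ref{lem1.1} for the key step, since the inclusion $s_nA\subseteq sA+(s_n-s)A$ follows directly from the pointwise identity you already wrote down; but invoking the general fact $(\lambda+\mu)A\subseteq\lambda A+\mu A$ is of course also fine.

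As for comparison with the paper: there is nothing to compare. The paper does not supply a proof of this lemma; it merely records it among a list of ``rather known'' facts and refers the reader to \cite{NI, KN}. Your write-up is precisely the short direct argument one would expect to find (or reconstruct) from those references.
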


\begin{lem}\label{lem1.6}
If $A_n\rightarrow A$ and $B_n\rightarrow B$, then $A_n+B_n\rightarrow A+B$.
\end{lem}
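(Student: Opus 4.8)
The plan is to deduce the claim from the compatibility of Minkowski addition with the Hausdorff topology on $2^Y$. Recall that $A_n\to A$ in this topology means that for every neighbourhood $U$ of $0$ in $Y$ there is an index $N$ with $A_n\subseteq A+U$ and $A\subseteq A_n+U$ for all $n\ge N$; the same characterization applies to $B_n\to B$. So it suffices to verify this two-sided inclusion for the sums $A_n+B_n$ and $A+B$ against an arbitrary neighbourhood of $0$.

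Thus, let a neighbourhood $W$ of $0$ be given. Since addition in the topological vector space $Y$ is continuous at the origin, I would first choose a neighbourhood $U$ of $0$ with $U+U\subseteq W$. Next, pick $N$ large enough that $A_n\subseteq A+U$, $A\subseteq A_n+U$, $B_n\subseteq B+U$ and $B\subseteq B_n+U$ hold simultaneously for all $n\ge N$. Adding the inclusions for $A_n$ and $B_n$, and using the monotonicity, associativity and commutativity of set addition, one gets
\[
A_n+B_n\subseteq (A+U)+(B+U)=A+B+(U+U)\subseteq A+B+W,
\]
and, symmetrically, $A+B\subseteq A_n+B_n+W$. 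Since $W$ was arbitrary, this is precisely the statement $A_n+B_n\to A+B$.

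The argument is essentially the triangle-type inequality $d_H(A_n+B_n,\,A+B)\le d_H(A_n,A)+d_H(B_n,B)$ rewritten without reference to a metric, so I do not expect a genuine obstacle. The only points demanding a little care are the bookkeeping with the neighbourhood $W$ — namely the choice of $U$ with $U+U\subseteq W$ and the set-algebra identity $(A+U)+(B+U)=A+B+(U+U)$ — and, if one insists on the topological (rather than metric) formulation, the fact that these two-sided neighbourhood inclusions do form a base for the Hausdorff topology on $2^Y$. If instead one restricts to compact (or bounded, closed and convex) sets and works with the Hausdorff metric, the same three lines collapse to the displayed inequality followed by letting $n\to\infty$.
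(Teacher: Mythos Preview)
Your argument is correct: choosing $U$ with $U+U\subseteq W$ and then combining the two-sided inclusions for $A_n$ and $B_n$ is exactly the right way to verify $A_n+B_n\to A+B$ in the Hausdorff topology, and your remark that this is the neighbourhood version of the inequality $d_H(A_n+B_n,A+B)\le d_H(A_n,A)+d_H(B_n,B)$ is apt.

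As for comparison with the paper: there is nothing to compare against. The paper does not prove this lemma; it is stated among a list of ``rather known'' facts whose proofs are delegated to the references \cite{NI, KN}. Your write-up therefore supplies precisely the routine verification that the authors chose to omit, and it matches the standard proof one finds in those sources.
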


\begin{lem}\label{lem1.7}
If $A_n\rightarrow A$ and $A_n\rightarrow B$, then $cl A=cl B$.
\end{lem}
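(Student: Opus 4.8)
The plan is to establish the two inclusions $\mathrm{cl}\,A\subseteq\mathrm{cl}\,B$ and $\mathrm{cl}\,B\subseteq\mathrm{cl}\,A$ separately; since the hypothesis is symmetric in $A$ and $B$, it suffices to prove the first one. I would start by unwinding the meaning of convergence in the Hausdorff topology on $2^Y$: $A_n\to A$ amounts to the statement that for every neighbourhood $U$ of the origin in $Y$ there is an index $N$ such that $A\subseteq A_n+U$ and $A_n\subseteq A+U$ whenever $n\ge N$, and likewise for $A_n\to B$.

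Next, fix an arbitrary point $a\in A$ and an arbitrary neighbourhood $U$ of $0$. Choose a symmetric neighbourhood $V$ of $0$ with $V+V\subseteq U$, which is possible in any topological vector space. By $A_n\to A$ there is $N_1$ with $A\subseteq A_n+V$ for $n\ge N_1$, and by $A_n\to B$ there is $N_2$ with $A_n\subseteq B+V$ for $n\ge N_2$. Taking any $n\ge\max\{N_1,N_2\}$ we get
\begin{eqnarray*}
a\in A_n+V\subseteq(B+V)+V=B+(V+V)\subseteq B+U .
\end{eqnarray*}

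Because $U$ was arbitrary and $V$ may be taken symmetric, the condition ``$a\in B+U$ for every neighbourhood $U$ of $0$'' says precisely that every neighbourhood of $a$ meets $B$, i.e. $a\in\mathrm{cl}\,B$. Hence $A\subseteq\mathrm{cl}\,B$, and since $\mathrm{cl}\,B$ is closed this yields $\mathrm{cl}\,A\subseteq\mathrm{cl}\,B$. Exchanging the roles of $A$ and $B$ gives $\mathrm{cl}\,B\subseteq\mathrm{cl}\,A$, and therefore $\mathrm{cl}\,A=\mathrm{cl}\,B$.

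I do not expect a real obstacle here: this is the classical ``uniqueness of the limit'' argument. The only point requiring a little care is that $Y$ is merely a topological vector space rather than a metric space, so in place of the triangle inequality one uses the halving trick $V+V\subseteq U$ together with symmetric neighbourhoods; the $T_0$ (equivalently Hausdorff) assumption on $Y$ guarantees the hyperspace topology behaves well but is not itself invoked in this deduction. If one prefers, for bounded closed sets the same conclusion follows in one line from the triangle inequality for the Hausdorff (pseudo)metric, $h(A,B)\le h(A,A_n)+h(A_n,B)\to 0$.
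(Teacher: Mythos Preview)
The paper does not supply its own proof of this lemma: it merely remarks that Lemmas~\ref{lem1.3}--\ref{lem1.7} ``are rather known and can be easily verified'' and refers to \cite{NI, KN}. Your argument is the standard uniqueness-of-limits proof for the Hausdorff hyperspace topology and is correct as written; the halving trick $V+V\subseteq U$ with symmetric $V$ is exactly the right substitute for the metric triangle inequality, and your observation that ``$a\in B+U$ for every neighbourhood $U$ of $0$'' is equivalent to $a\in\mathrm{cl}\,B$ (via symmetric neighbourhoods) is precisely what is needed. There is nothing to compare against beyond noting that your write-up is more detailed than what the paper provides.
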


\section{Set-valued solution of the  Pexider functional equation}

In this section, we give the solution of the following set-valued functional equation
\begin{eqnarray}
F(ax+by+c)= AG(x)+BH(y)+C
\end{eqnarray} for all $x,y \in X$.

\begin{thm}
Assume that $(X,+)$ is a vector space and $Y$ is a $T_0$ topological vector space. If  set-valued  functions $F:X\rightarrow CC(Y), G:X\rightarrow CC(Y)$ and $H:X\rightarrow CC(Y)$ satisfy the functional  equation
\begin{eqnarray}\label{eqn1}
F(ax+by+c)=A G(x)+B H(y)+C
\end{eqnarray}
for all $x,y\in X$, where $a,b,A$ and $B$ are positive real numbers, $c$ and $C$ are fixed bounded convex sets. Then there exist an additive set-valued mapping  $F_0:X\rightarrow CC(Y)$ and sets $\alpha,\beta\in CC(Y)$ such that
\begin{eqnarray*}
F(x+c)= F_0\left(\frac{x}{a}\right)+K, \quad AG(x)=F_0(x)+A\alpha \quad and \quad BH(x)=F_0\left(\frac{bx}{a}\right)+B\beta
\end{eqnarray*}
for all $x\in X$.
\end{thm}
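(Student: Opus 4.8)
The plan is to reduce the three-function Pexider-type equation \eqref{eqn1} to an ordinary additive set-valued equation by a sequence of substitutions, and then read off the three representation formulas. First I would introduce auxiliary functions absorbing the constant shifts: set $\widetilde F(x) := F(x+c)$, so that \eqref{eqn1} becomes $\widetilde F(ax+by) = AG(x)+BH(y)+C$ for all $x,y\in X$. Then I would plug in distinguished values of the variables. Putting $y=0$ gives $\widetilde F(ax) = AG(x)+BH(0)+C$, and putting $x=0$ gives $\widetilde F(by) = AG(0)+BH(y)+C$; these identify $AG(x)$ and $BH(y)$ up to the fixed convex sets $\alpha := G(0)$ and $\beta := H(0)$ (with the compensating closed convex constants handled via Lemma~\ref{lem1.1}, since $A,B\ge 0$ and the sets involved are convex). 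Subtracting these back into the functional equation, I expect to obtain, after a change of variable $u=ax$, $v=by$, an equation of the form $\widetilde F(u+v) + (\text{const}) = \widetilde F(u) + \widetilde F(v) + (\text{const})$, i.e. a Pexiderized Cauchy equation for $\widetilde F$ alone.

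The next step is to extract a genuine additive map. Define $F_0(x) := \widetilde F(ax) - (AG(0)+BH(0)+C)$ in the appropriate cancellative sense; more carefully, I would use Lemma~\ref{lem1.2} (cancellation of a bounded set against a closed convex set) to strip the fixed convex constant $K := AG(0)+BH(0)+C$ and show that the resulting $F_0$ is well-defined in $CC(Y)$ and satisfies $F_0(x+y) = F_0(x)+F_0(y)$ for all $x,y\in X$. Establishing that $F_0$ actually takes values in $CC(Y)$ (closedness and convexity) uses that $F$ does and that sums of closed convex sets behave well; convexity of $F_0(x+y)$ vs. $F_0(x)+F_0(y)$ is where the convexity clause of Lemma~\ref{lem1.1} and the Rådström-type cancellation Lemma~\ref{lem1.2} do the real work. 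Once $F_0$ is additive, re-expressing the earlier substitution identities yields $F(x+c) = \widetilde F(x) = F_0(x/a) + K$, $AG(x) = \widetilde F(ax) - BH(0) - C = F_0(x) + A\alpha$, and $BH(x) = \widetilde F(bx) - AG(0) - C = F_0(bx/a) + B\beta$, using additivity of $F_0$ to rescale the argument (note $F_0(by/a)$ arises from $\widetilde F(by)$ via $F_0$ applied after dividing by $a$).

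The main obstacle I anticipate is the cancellation arithmetic: set addition is not invertible, so every "subtraction" of a constant set must be justified by Lemma~\ref{lem1.2}, and this requires checking that the sets being cancelled are bounded (the constants $c$, $C$, and the values $G(0)$, $H(0)$ are hypothesized bounded or live in $CC(Y)$, but one must verify boundedness propagates where needed) and that the target sets are closed and convex. A related subtlety is verifying that $F_0$ is single-valued-free of the constant in a \emph{consistent} way — i.e. that the $F_0$ obtained from the $y=0$ substitution agrees with the one that makes the full equation additive; this is exactly where the structure of \eqref{eqn1} (same $F$ on the left, the constant $C$ common to both specializations) is used, together with Lemma~\ref{lem1.2} to promote an inclusion to an equality. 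The convexity and positivity hypotheses ($a,b,A,B>0$, $c$ and $C$ convex) are precisely what make Lemma~\ref{lem1.1}'s identity $(\lambda+\mu)A=\lambda A+\mu A$ available, so I would flag at each rescaling step that these hypotheses are being invoked. The remaining verifications — that $F_0$ is additive and that the three displayed formulas hold — are then routine substitutions.
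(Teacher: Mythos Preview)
Your reduction to a single-function equation is sound: after the substitutions $y=0$ and $x=0$ one does obtain $\widetilde F(u)+\widetilde F(v)=\widetilde F(u+v)+K$ with $K=A\alpha+B\beta+C=\widetilde F(0)$, and the three representation formulas would indeed follow once an additive $F_0$ with $\widetilde F(ax)=F_0(x)+K$ is in hand. The gap is precisely at the step where you \emph{define} $F_0(x):=\widetilde F(ax)-K$ ``in the appropriate cancellative sense.'' Minkowski addition is not invertible, and Lemma~\ref{lem1.2} is a cancellation law ($A+C\subset B+C\Rightarrow A\subset B$), not a subtraction operator: it lets you compare two given candidates, but it does not manufacture a set $S\in CC(Y)$ with $S+K=\widetilde F(ax)$. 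The existence of such an $S$ is exactly the nontrivial content of the theorem, and nothing in your outline produces it; invoking Lemma~\ref{lem1.2} at that point is a category error.

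The paper supplies the missing object by a limiting construction rather than a formal difference. From the functional equation one first shows $G(2x)\subset 2G(x)$ (this is where Lemma~\ref{lem1.2} is actually used), so that $\bigl(2^{-n}G(2^{n}x)\bigr)_{n}$ is decreasing, and then \emph{defines} $F_0(x):=\bigcap_{n}2^{-n}G(2^{n}x)$, which always lies in $CC(Y)$. Additivity of $F_0$ is checked via Lemma~\ref{lem1.3}, and the identity $AG(x)=F_0(x)+A\alpha$ (hence $\widetilde F(ax)=F_0(x)+K$) is obtained by proving the iterate $2^{-n}F(2^{n}ax+c)+(1-2^{-n})AG(0)=2^{-n}F(ax+c)+(1-2^{-n})AG(x)$ and letting $n\to\infty$ with Lemmas~\ref{lem1.4}--\ref{lem1.7}. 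Your algebraic outline becomes correct only after this analytic device is inserted; as written, the step that ``strips the fixed convex constant $K$'' does not go through.
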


\begin{proof}
First, assume that $0 \in G(0)$ and $0\in H(0)$. Then, for all  $x, y\in X$, we have
\begin{eqnarray*}
\begin{split}
\;& AG(x)+BH(0)+C=F(2ax+c)=F\left(ax+b\frac{ax}{b}+c\right)\\
\;& =AG(x)+BH\left(\frac{ax}{b}\right)+C   \\
\;&\subset AG(x)+AG(0)+BH\left(\frac{ax}{b}\right)+C=AG(x)+F(ax+c)\\
\;&=AG(x)+AG(x)+BH(0)+C.
\end{split}\end{eqnarray*}
By Lemma \ref{lem1.2}, we get $G(2x)\subset 2G(x)$, which implies that the sequence
$(2^{-n}G(2^nx))_{n\in N}$ is decreasing. Put $F_0(x):=\bigcap_{n\in {\mathbb{N}}}2^{-n}G(2^nx), x\in X$. It is clear that $F_0(x)\in CC(Y)$ for all $x\in X$.
Similarly, we get $ H(2x)\subset 2H(x)$ for all $x\in X$. Then
\begin{eqnarray*}
A G(x)+B H(0)+C=F(a x+c)=AG(0)+BH\left(\frac{ax}{b}\right)+C.
\end{eqnarray*}
By Lemma \ref{lem1.3}, we obtain  that $A F_0(x)=B\bigcap 2^{-n}H\left(\frac{2^n ax}{b}\right)$ for all $x\in X$.
 From  (\ref{eqn1}), we get  $F(a2^n x+c)=A G(2^n x)+B H(0)+C, n \in \mathbb{N}$ and so
\begin{eqnarray*}
AF_0(x)=\bigcap_{n\in \mathds{N}} 2^{-n}F(a2^n x+c)
\end{eqnarray*}
 for all $x\in X$.

 Hence, using once more Lemma \ref{lem1.3}, we get
\begin{eqnarray*}
\begin{split}
AF_0(x_1+x_2)\;& =\bigcap_{n\in \mathbb{N}} 2^{-n}F(a2^nx_1+a2^n x_2+c)\\ \;&=\bigcap_{n\in \mathbb{N}}2^{-n}\left(A G\left(2^nx_1\right) +B H\left(\frac{2^na x_2}{b}\right)+C\right)\\
\;& =\bigcap_{n\in \mathbb{N}}2^{-n}A G\left(2^nx_1\right) +\bigcap_{n\in \mathbb{N}}2^{-n}B H\left(\frac{2^na x_2}{b}\right)+\bigcap_{n\in \mathbb{N}}2^{-n}C\\
\;&= AF_0(x_1)+AF_0(x_2), x_1,x_2\in X,
\end{split}
\end{eqnarray*}
which means that the set-valued function $F_0$ is additive.

Now observe that
\begin{eqnarray}\label{eqn3}
F(nax+c)+(n-1)AG(0)=F(ax+c)+(n-1)AG(x)
\end{eqnarray}
for all $x\in X$ and $n\in \mathds{N}$. Indeed, for $n=1$ the equality
is trivial. Assume that
it holds for a natural number $k$. Then, in virtue of (\ref{eqn1}), we obtain
 \begin{eqnarray*}
 \begin{split}
 \;& F((k+1)ax+c)+kA G(0)=A G(x)+B H\left(\frac{akx}{b}\right)+C +kA G(0)\\
 \;&=AG(x)+AG(0)+BH\left(\frac{akx}{b}\right)+C+kAG(0)-AG(0)\\
 \;& =AG(x)+F(akx+c)+(k-1)AG(0)\\
 \;&=AkG(x)+F(ax+c)
 \end{split}
 \end{eqnarray*}
 which proves that (\ref{eqn3}) holds for $n=k+1$. Thus, by induction, it holds for all $n\in \mathds{N}$. In particular, we have
 \begin{eqnarray*}
 F(2^n ax+c)+(2^n-1)AG(0)=F(ax+c)+(2^n-1)AG(x),
 \end{eqnarray*}
 and so
 \begin{eqnarray*}
 2^{-n}F(2^n ax+c)+(1-2^{-n})AG(0)=2^{-n}F(ax+c)+(1-2^{-n})AG(x)
 \end{eqnarray*} for all $x\in X$.
 By Lemma \ref{lem1.4}, $2^{-n}F(2^n ax+c)\rightarrow \bigcap_{n\in\mathds{N}}2^{-n}F(2^n ax+c)=F_0(x)$.

On the other hand, by Lemma \ref{lem1.5}, $1-2^{-n}G(0)\rightarrow G(0), 2^{-n}F(ax+c)\rightarrow
 \{0\}$ and $(1-2^{-n})G(x)\rightarrow G(x)$. Thus, using Lemmas \ref{lem1.6} and  \ref{lem1.7}, we get $cl [F_0(x)+AG(0)]=cl AG(x)$,
 whence $AG(x)=F_0(x)+AG(0)$ for all $x\in X$. Similarly, we can obtain $BH(x)=F_0\left(\frac{bx}{a}\right)+BH(0), x\in X$. Let $\alpha:=G(0)$ and $\beta:=H(0)$. Then $AG(x)=F_0(x)+A\alpha$ and $BH(x)=F_0\left(\frac{bx}{a}\right)+B\beta $ for all $x\in X$. Moreover $F(ax+c)=AG(x)+BH(0)+C= F_0(x)+AG(0)+BH(0)+C=F_0(x)+K, x\in X$, where $K=A\alpha+B\beta +F_0(0)+C$. This finishes our proof in the case that $0\in G(0)$ and $0\in H(0)$.

In the opposite case, fix arbitrarily points $a\in G(0)$ and $b\in H(0)$, and consider the set-valued mappings $F_1, G_1,H_1: X\rightarrow CC(Y)$ defined by $ G_1(x):=G(x)-a$ and $H_1:=H(x)-b, x\in X$. These set-valued mappings  satisfy the equation (\ref{eqn1}) and moreover $0\in G_1(0)$ and $0\in H_1(0)$. Therefore, by what we have discussed previously, we can get the same result. This completes the proof.
\end{proof}

\section*{Conclusion}

We have determined the solution of generalized
convex set-valued mappings satisfying certain functional equation. Some conclusions of stability of set-valued functional equations have been  obtained.

\bigskip

\section*{Declarations}

\bigskip

\noindent \textbf{Competing interests}\newline
\noindent The authors declare that they have no competing interests.

\bigskip

\noindent \textbf{Authors' contributions}\newline
\noindent The authors equally conceived of the study, participated in its
design and coordination, drafted the manuscript, participated in the
sequence alignment, and read and approved the final manuscript. \bigskip

\noindent {\bf Funding}
\newline \noindent
This work was supported by National Natural Science Foundation of China (No.  11761074), Project of Jilin Science and Technology Development for Leading Talent of Science and Technology Innovation in Middle and Young and Team Project(No.20200301053RQ),  and the scientific research project of Guangzhou College of Technology and Business in 2020 (No. KA202032).
\bigskip

\bigskip

\end{document}